\author{Corentin Faipeur\footnote{Institut Fourier, Université Grenoble Alpes, France - corentin.faipeur@univ-grenoble-alpes.fr}}
\title{Subcritical Boolean percolation on graphs of bounded degree}
\newtheorem{thm}{Theorem}
\newtheorem{lemme}{Lemma}
\newtheorem{rem}{Remark}
\newcommand{\E}{\mathbf{E}}
\renewcommand{\P}{\mathbf{P}}
\newcommand{\R}{\mathbf{R}}
\newcommand{\Z}{\mathbf{Z}}
\newcommand{\N}{\mathbf{N}}
\newcommand{\ds}{\displaystyle}
\newcommand{\ind}[1]{\mathbf{1}_{\{#1\}}}
\newcommand{\ssi}{\Longleftrightarrow}
\newcommand{\et}{\text{ and }}
\newcommand{\si}{\text{ if }}
\newcommand{\fonction}[4]{\begin{array}{|rcl}
#1 & \longrightarrow & #2 \\
#3 & \longmapsto & #4 \end{array}}
\newcommand{\cas}[4]{\begin{cases}
#1 & #2\\
#3 & #4
\end{cases}}
\newcommand{\B}{\mathcal{B}}
\newcommand{\C}{\mathcal{C}}
\newcommand{\rad}{\operatorname{rad}}
\begin{document}
\maketitle
\begin{abstract}
    In this paper, we study a model of long-range site percolation on graphs of bounded degree, namely the Boolean percolation model. In this model, each vertex of an infinite connected graph is the center of a ball of random radius, and vertices are said to be active independently with probability $p\in [0,1]$. 
    We consider $W$ to be the reunion of random balls with an active center. In certain circumstances, the model does not exhibit a phase transition, in the sense that $W$ almost surely contains an infinite component for all $p>0$, or even $W$ covers the entire graph.
    In this paper, we give a sufficient condition on the radius distribution for the existence of a subcritical phase, namely a regime such that almost surely all the connected components of $W$ are finite.
    Additionally, we provide a sufficient condition for the exponential decay of the size of a typical component.
\end{abstract}

\section{Introduction}
We consider a discrete version of Boolean percolation on general graphs of bounded degree.
It can be described as follows. Each vertex of an infinite connected graph is considered as the center of a ball with random radius.
We assume that the radii are independent and identically distributed (i.i.d.) positive random variables.
Each vertex is also set to be \emph{active} with probability $p \in [0,1]$, independently of anything else.
Consider $W$ the reunion of random balls with an active center.
In the original paper by Lamperti \cite{lamperti}, the vertices are considered as fountains that wet their neighbours up to some distance (which is $0$ with probability $p$), this is why we call $W$ the \emph{wet set}.
Note that in our setting, an active fountain wets its own site (which is not the case in \cite{lamperti}).
Therefore, it is clear that $W$ is empty if $p=0$ and that $W$ covers the entire graph if $p=1$.
Under weak assumption on the graph, we can construct situations where $W$ almost surely contains an infinite connected component for all $p>0$. In fact, it is the case as soon as the graph contains an infinite path of vertices and the expected radius of a wet ball diverges.
On an undirected graph, we even have that $W$ almost surely covers the entire graph for any $p>0$ if the expected sizes of the wet balls are infinite.
In this paper, we aim to find sufficient conditions on the radius distribution for the existence of a subcritical phase, namely non-trivial values of $p$ such that all the connected components of $W$ are finite almost surely. We also consider the question of exponential decay of the size of a typical connected component.

This model can be seen as a discrete version of the continuum Poisson Boolean percolation, which has been widely studied (e.g. see the book \cite{book_continuum_perco}). In the continuum, for any $d\geq 1$, we consider random balls centered at the points of a homogeneous Poisson point process on $\R^d$ of intensity $\lambda \in (0,\infty)$.
Here $W$ is the subset of $\R^d$ of points covered by the balls, and $W_0$ denotes the connected component of the origin (possibly empty).
Let $R$ be a copy of one of the random radii. In \cite{Hall}, Hall proved that for any $d\geq 1$, if $\E[R^{2d-1}]$ is finite, then $W_0$ is almost surely bounded for small enough $\lambda$. This never happens when $\E[R^d]$ is infinite, because in this case the whole space is almost surely covered for any $\lambda>0$ (Proposition 3.1 in \cite{book_continuum_perco}).
Gou\'er\'e proved in \cite{gouere} that the finiteness of $\E[R^d]$ is actually a necessary and sufficient condition for the existence of a subcritical phase.
Sharpness of the phase transition has been investigated in \cite{ATT}, \cite{DCRT} and \cite{dembin2022sharpsharpnesspoissonboolean}.

Returning to the discrete setting, Coletti and Grynberg established in \cite{coletti2014absencepercolationbernoulliboolean} a result similar to that of Gou\'er\'e, proving that Boolean percolation on $\Z^d$ admits a subcritical phase if and only if $\E[R^d]$ is finite, for all $d \geq 1$. Together with Miranda \cite{ColettiMirandaGrynberg}, they furthermore prove an analogous statement for doubling graphs, which includes all the graphs with polynomial growth (thus $\Z^d$).
In \cite{Braun_2024}, Braun handled the case of the directed $d$-ary tree, for which the existence of an infinite component in $W$ is equivalent to the survival of a branching process.
A related model is the Rumor Percolation model (see the review \cite{review} and the references therein), which can be described as follows.
At time 0, only the root (a fixed vertex of the graph) knows about the rumor, and starts to spread it.
Then, at each step, a new vertex hears the rumor if it is within a random radius of a previously informed vertex.
We point out the paper by Lebensztayn and Rodriguez \cite{LEBENSZTAYN20082130} which deals with general graphs of bounded degree (but with geometric distribution for the radii) and shows a non-trivial phase transition.
However, contrary to the Boolean setting above, this rumor model (or disk-percolation in \cite{LEBENSZTAYN20082130}) is crucially directed.
In fact, the rumor necessarily starts from the root and then spreads across the graph, whereas in Boolean percolation the rumor may originate from a source at infinity and subsequently contaminate the root.
In this paper, we prove that for any graph with degree bounded by $\Delta>1$, the Boolean model admits a subcritical phase if $\E[\Delta^{2R}]$ is finite.
In fact, a weaker condition will be given in Theorem \ref{thm_pc>0}.
The graph can be directed or not, and the above condition can be improved for directed graphs. The proof of this result can be seen as a Peierls argument.
We then show that the size of the connected component of the root has exponential tails for $p$ small enough if $\E[e^{t \Delta^R}]$ is finite for some $t>0$.
This is obtained by comparison with a subcritical Galton-Watson process, so we do not assert that this result would be sharp.

\paragraph{Acknowledgments}
This work is part of a PhD thesis supervised by Vincent Beffara. I would like to thank him for his valuable advice and support, as well as for our stimulating discussions. I would also like to thank Jean-Baptiste Gou\'er\'e for his useful comments on the first version of this paper (in particular for pointing out a mistake in the proof of Lemma \ref{lemma_domination}), and the anonymous referee for providing additional bibliographical information.

\section{Notations and statement of the main results}

Throughout the paper, $\N = \{0,1,2 \dots\}$ denotes the set of non-negative integers and $\N^* = \{1,2 \dots\}$ the set of positive integers.
For an event $A$, $\mathbf{1}_A$ is the indicator function of the event.
For two probability measures $\mu$ and $\nu$, we write $\mu \preceq \nu$ when $\nu$ stochastically dominates $\mu$, i.e. when there exists a coupling $(X,Y)$ of the two measures such that $X\leq Y$ almost surely.

\subsection{Graph terminology}\label{Graphs terminology}

Let $G=(V,E)$ be an infinite locally finite connected graph. We will consider both directed and undirected graphs.
Thus, $E$ is seen as a subset of $V\times V$, and $G$ is undirected if and only if $\forall x,y \in V$, $(x,y)\in E \ssi (y,x) \in E$.
However, for directed graphs, we will only consider the weak connectivity, i.e. the connectivity of the underlying undirected graph.
For all $x,y \in V$, the distance $d(x,y)\in \N \cup \{+\infty\}$ from $x$ to $y$ is defined as the minimal number of (oriented) edges of a path from $x$ to $y$ (and $d(x,y)=+\infty$ if such a path does not exist).
Note that it is symmetric if and only if the graph is undirected.
For $x \in V$ and $r\in \N$, let $B_r(x) := \{y \in V \mid d(x,y) <r\}$ (so $B_0(x) = \emptyset$, $B_1(x) = \{x\}$ etc.). 

We now define three different notions of the boundary of a subset of the graph. For every $B \subset V$, let 
$$\partial^+B := \{y \in V-B \mid \exists x \in B, (x,y) \in E\},$$
$$\partial^-B := \{y \in V-B \mid \exists x \in B, (y,x) \in E\},$$
$$\partial B:= \partial^+B \cup \partial^-B.$$
Note that we only consider outer boundaries, meaning that a vertex in $\partial B$ do not belong to $B$ but it is adjacent (the orientation of the edges between it and the vertices of $B$ determine whether it is on $\partial^+B$, $\partial^-B$ or both). These three notions coincide for undirected graphs.
Associated with that, we define three distinct notions of degree. For every $x \in V$, let
$$\begin{tabular}{cccc}
    $d^+(x) := |\partial^+\{x\}|$,&$d^-(x) := |\partial^-\{x\}|$&\et & $d(x) := |\partial\{x\}|$;
\end{tabular}$$
$d^+(x)$ and $d^-(x)$ are usually called the out-degree and in-degree of vertex $x$, and they correspond respectively to the number of oriented edges emanating from $x$ or pointing at $x$.
However, $d(x)$ is not the total number of edges attached to $x$, but rather the number of neighbours of $x$ without regard to the orientation, i.e. $d(x)$ is the degree of $x$ in the underlying undirected graph. 
Finally, let $$\Delta_+=\Delta_+(G):= \sup \{d_+(x), x \in V\},$$ $$\Delta_-=\Delta_-(G):= \sup \{d_-(x), x \in V\},$$ $$\Delta = \Delta(G) :=\sup \{d(x), x \in V\};$$
we clearly have $\Delta_\pm \leq \Delta \leq \Delta_+ + \Delta_-$.
If $\Delta$ is finite, the graph is said to be of bounded degree.
Note again that if the graph is undirected, we have for all $x\in V$, $d^+(x)=d^-(x)=d(x)$ and $\Delta_+=\Delta_-=\Delta$.

For an oriented graph $G=(V,E)$, we define its \emph{reversed graph} or \emph{transpose graph} $G^\top$ on the same set of vertices, but with reversed orientation of the edges.
Therefore, if $(x,y) \in E$ is an edge of $G$, then $(y,x)$ is an edge of $G^\top$ and vice versa.
For $x \in V$ and $r\in \N$, we write $B^\top_r(x)$ for the corresponding ball in $G^\top$, i.e. $B^\top_r(x)$ is the set of vertices $y \in V$ such that $d(y,x) <r$.
In other words, for all $x,y \in V$, we have the equivalence
$$y \in B_r(x) \ssi x \in B_r^\top(y).$$

Suppose that $G$ is of bounded degree.
Define for all $r \in \N$
\begin{equation*}
    c_r=c_r(G):=\sup_{x \in V} |B_r(x)| \et c_r^\top = c_r^\top(G) := \sup_{x \in V} |B^\top_r(x)|.
\end{equation*}
Since the graph is of bounded degree, we know that these two quantities are finite.
In fact, $c_r \leq 1+\Delta_+ + \dots + \Delta_+^{r-1} \leq \max(r, 2 \Delta_+^{r-1})$ and similarly $c_r^\top \leq \max(r, 2 \Delta_-^{r-1})$.
These two quantities are also upper bounded by $2 \Delta^{r-1}$ (since $\Delta>1$ for any infinite connected graph).
We also define $s_r$ and $s^\top_r$ respectively as the supremum over $x \in V$ of the number of vertices at distance exactly $r$ from $x$ in $G$ or $G^\top$ respectively:
\begin{equation*}
    s_r=s_r(G):=\sup_{x \in V} |\partial^+ B_r(x)| \et s_r^\top = s_r^\top(G) := \sup_{x \in V} |\partial^- B^\top_r(x)|.
\end{equation*}
In the following, we will use the upper bounds $s_r \leq \Delta_+^r$ and $s_r^\top \leq \Delta_-^r$.

\subsection{Phase transition of the Boolean percolation model}
We now precisely define our Boolean model.
Let $p \in [0,1]$ and let $\nu$ be a probability distribution on $\N^*$. Let $(R_x)_{x\in V}$ be a family of i.i.d. random variables distributed according to $\nu$, and let $(\sigma_x)_{x\in V}$ be a family of independent Bernoulli random variables of parameter $p$, independent from $(R_x)_{x\in V}$.
A vertex $x$ of the graph should be seen as a fountain that attempts to wet vertices at distance less than $R_x$, including itself.
The attempt is successful if and only if $\sigma_x=1$.
Therefore, the set of wet vertices, or covered set, is defined as 
\begin{equation}\label{W(G)}
    W=W(G):= \bigcup_{x \in V, \sigma_x=1} B_{R_x}(x).
\end{equation}
The balls $B_{R_x}(x)$ with $\sigma_x=1$ that appear in the union \eqref{W(G)} are called \emph{wet balls}.
For all $x\in V$, write $W_x$ for the connected component of $x$ in $W$ (with the convention that $W_x=\emptyset$ when $x \notin W$).
As already observed, the probability of the event $\{|W_x| = \infty\}$ is 0 for $p=0$ and $1$ for $p=1$, and is clearly non-decreasing in $p$.
The \emph{percolation} event is defined as the reunion
$\bigcup_{x\in V} \{|W_x| = \infty\}$. It has probability 0 or 1 by the Kolmogorov's zero–one law.
Therefore, we define the critical parameter of the model as
\begin{align*}
    p_c(G, \nu) = \inf \{p \in [0,1] : \P\left(\cup_{x \in V} \{|W_x| = \infty\}\right) =1 \}.
\end{align*}

As already observed in \cite{coletti2014absencepercolationbernoulliboolean} and \cite{LEBENSZTAYN20082130}, we have $p_c(G, \nu) \leq p_c^{\text{site}}(G)$ where $p_c^{\text{site}}(G)$ is the critical parameter for Bernoulli site percolation on $G$.
In fact, $W(G)$ contains all the sites $x \in V$ such that $\sigma_x=1$, so the Boolean model stochastically dominates Bernoulli site percolation.

We prove the following result.

\begin{thm}\label{thm_pc>0}
    Let $G$ be a graph of bounded degree, and $R$ distributed according to $\nu$.
    For all $n \geq 1$, let $\varphi(n):= \Delta \sum_{r=1}^n c_r c_r^\top$.
    If $\E[\varphi(R)] < \infty$, then
    $$p_c(G,\nu) \geq \dfrac{1}{\E[\varphi(R)]} >0.$$
\end{thm}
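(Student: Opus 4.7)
The plan is to carry out a Peierls-type first-moment argument on chains of wet balls rooted at the origin. Fix $o \in V$, and let $\mathcal{C}_o$ denote the (random) set of wet centers $y$ (those with $\sigma_y = 1$) whose ball $B_{R_y}(y)$ lies in the connected component $W_o$ of $o$ in $W$. Since $\nu$ is supported on $\N^*$ all radii are a.s.\ finite and each wet ball contains its center, so $W_o = \bigcup_{y \in \mathcal{C}_o} B_{R_y}(y)$ is infinite if and only if $|\mathcal{C}_o|$ is infinite. It therefore suffices to prove $\E[|\mathcal{C}_o|] < \infty$ for every $o \in V$ whenever $p < 1/\E[\varphi(R)]$: this renders $\mathcal{C}_o$ a.s.\ finite for every $o$, so by Kolmogorov's $0$--$1$ law the percolation probability $\P(\bigcup_{x \in V} \{|W_x| = \infty\})$ is zero, yielding $p_c(G, \nu) \geq 1/\E[\varphi(R)]$.

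Next, I would decompose $\mathcal{C}_o$ through chains of wet balls: every $y \in \mathcal{C}_o$ admits a sequence of distinct wet centers $y_1, \ldots, y_k = y$ such that $o \in B_{R_{y_1}}(y_1)$ and for each $i$ the balls $B_{R_{y_i}}(y_i)$ and $B_{R_{y_{i+1}}}(y_{i+1})$ either share a vertex or have adjacent vertices in $G$. Writing $N_k$ for the number of such chains of length $k$, this gives $\E[|\mathcal{C}_o|] \leq \sum_{k \geq 1} \E[N_k]$. The backbone of the proof is then the multiplicative bound
\[
  \E[N_{k+1}] \;\leq\; p\,\E[\varphi(R)]\,\E[N_k], \qquad k \geq 1,
\]
together with the base estimate $\E[N_1] \leq p\,\E[c_R^\top]$ (from counting $y_1 \in B_{R_{y_1}}^\top(o)$); iteration followed by summation of a geometric series yields $\E[|\mathcal{C}_o|] \leq p\,\E[c_R^\top]/(1 - p\,\E[\varphi(R)])$, finite as soon as $p\,\E[\varphi(R)] < 1$.

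The heart of the argument is the per-step bound, which reduces to the following uniform estimate: for every $y$ and every $r \geq 1$, the expected number of wet centers $z$ whose ball $B_{R_z}(z)$ meets $B_r(y) \cup \partial B_r(y)$ is at most $p\,\varphi(r)$. To obtain it, one writes the condition as the existence of a contact vertex $w \in B_r(y) \cup \partial B_r(y)$ with $w \in B_{R_z}(z)$; by independence of the $(\sigma_z, R_z)$ pairs across $z$, a union bound, and Fubini, the count reduces to $p\sum_{w} \E[|B_R^\top(w)|]$. The sharp form $\sum_{r'=1}^{r} c_{r'} \max(\Delta s_{r'-1}^\top, c_{r'}^\top)$ is then obtained through a careful stratification: one organizes the contact vertices $w$ by their distance layer within the parent ball (which accumulates a factor $c_{r'}$), and the candidate centers $z$ according to whether it is tighter to bound them using the transpose-shell estimate $s_{r'-1}^\top$ (with a factor $\Delta$ absorbing the in-boundary contribution $\partial^- B_r(y)$, which is not a shell of $G$ in the directed setting) or the transpose-ball estimate $c_{r'}^\top$; taking the smaller of the two is exactly the $\max$ in $\varphi$.

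The main technical obstacle is this last piece of bookkeeping: correctly handling the asymmetry between $\partial^+$ and $\partial^-$ in directed graphs, since $\partial^- B_r(y)$ is not controlled by a single shell size and forces the factor $\Delta$ in $\Delta s_{r-1}^\top$. Once the per-parent child-count bound $p\,\varphi(r)$ is established, the chain-counting recursion and the geometric summation are routine, giving the stated $p_c(G, \nu) \geq 1/\E[\varphi(R)]$.
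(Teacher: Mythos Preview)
Your chain-counting strategy is a reasonable Peierls approach and is genuinely different from the paper's (which reformulates the model as a Poisson point process of marked balls, runs a layered exploration, and dominates the layer sizes by a subcritical Galton--Watson process whose offspring are \emph{vertices}, not centers). However, the central per-step estimate you state is false as written, and the recursion does not close.

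You claim that for every parent ball $B_r(y)$, the expected number of wet centers $z$ with $B_{R_z}(z)\cap(B_r(y)\cup\partial B_r(y))\neq\emptyset$ is at most $p\,\varphi(r)$, where $\varphi(r)=\sum_{r'=1}^{r}c_{r'}\max(\Delta s_{r'-1}^\top,c_{r'}^\top)$. This cannot hold: that expected count depends on the \emph{child} radius law $\nu$ through arbitrarily large radii $r'$, whereas your bound is a finite sum up to the parent radius $r$ and involves $\nu$ not at all. (Concretely, if $\nu=\delta_{r_0}$ with $r_0$ large and $r=1$, the count is of order $p\,c_{r_0}^\top$, while $\varphi(1)=\max(\Delta,1)$.) Your stratification paragraph reverses the roles: in the paper, $r'$ indexes the radius of the \emph{child} ball, $c_{r'}$ is the number of new \emph{vertices} contributed by that child (not a layer count in the parent), and $\max(\Delta s_{r'-1}^\top,c_{r'}^\top)$ bounds how many centers of radius-$r'$ balls can have a \emph{single} prescribed boundary vertex. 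That is a per-boundary-vertex offspring bound, not a per-parent-ball bound, and it is why the paper's Galton--Watson tracks vertices rather than centers. Because your inner factor $g(y_k,r_k)=\sum_{z}\P(B_R(z)\sim B_{r_k}(y_k))$ genuinely depends on $r_k$, the recursion $\E[N_{k+1}]\le p\,\E[\varphi(R)]\,\E[N_k]$ does not follow from summing over $y_{k+1}$ alone; one would need to weight chains by something like the last ball's size (or its boundary size) to absorb this dependence, at which point the argument essentially becomes the paper's vertex-counting one.
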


Using the inequalities given at the end of Section \ref{Graphs terminology}, observe that
$$\varphi(n) \leq \Delta \sum_{r=1}^n 2 \Delta^{r-1} 2 \Delta^{r-1} = \frac{4\Delta}{\Delta^2-1} (\Delta^{2n}-1).$$
Therefore, we immediately get from Theorem \ref{thm_pc>0} that $p_c(G,\nu) >0$ if $\E[\Delta^{2R}] < \infty$. This is the best condition we can obtain without further assumption on the graph.
On a directed graph, the condition can be replaced by $\E[\max(R, \Delta_+^R)\max(R, \Delta_-^R)] < \infty$.
\begin{rem}
    Consider $\mathbf{T}^d$ the infinite $(d+1)$--regular tree, and choose an orientation of its edges so that $\Delta_-=d$ and $\Delta_+=1$ (this can be done by choosing an end of the tree and orientating all the edges in the direction of this end).
    For this graph, the condition for having a phase transition given by Theorem \ref{thm_pc>0} is $\E[Rd^R]< \infty$. We do not claim that it is necessary, but one can easily observe that the finiteness of $\E[d^R]$ is necessary.
    It is a general fact in Boolean percolation models that for any site $\rho\in \mathbf{T}^d$, the expected number of wet balls containing $\rho$ is $p\sum_{x \in V} \sum_{r>d(x,\rho)} \nu(r)$.
    This is exactly $p (\E[d^R]-1)/(d-1)$ in the case of $\mathbf{T}^d$ with our choice of orientation. If this expectation diverges, by applying the Borel-Cantelli lemma we find that $\rho$ belongs to infinitely many wet balls, and finally $W=\mathbf{T}^d$ almost surely.
\end{rem}

Let $\rho \in V$ be a specified vertex of the graph that will be called the root.
We give a sufficient condition to ensure that the size of $W_\rho$ has an exponential tail for small enough $p$.

\begin{thm}\label{thm_expo_decay}
    For all $t>0$ and $n\geq 1$, let $\psi_t(n) = \sum_{r=1}^n c_r^\top e^{t c_r}$.
    Under the same assumption as in Theorem \ref{thm_pc>0}, if furthermore $\E[\psi_t(R)]$ is finite for some $t>0$, then for all $p<\E[\varphi(R)]^{-1}$, there exists $\lambda>0$ such that for every $n\in \N$,
    $$\P(|W_\rho| > n) \leq e^{-\lambda n}.$$
    In particular, $\E[\psi_t(R)] < \infty$ holds if $\E\left[e^{2t \Delta^R}\right] < \infty$.
\end{thm}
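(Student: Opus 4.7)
The plan is to upgrade the exploration argument behind Theorem \ref{thm_pc>0} into a stochastic domination of $|W_\rho|$ by the size-weighted total progeny of a subcritical Galton--Watson process with an exponentially integrable offspring distribution, and then to apply the classical fact that such processes have exponentially decaying total progeny.

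Reusing the sequential exploration of Theorem \ref{thm_pc>0}, one builds a random rooted tree $\mathcal{T}$ whose nodes are the wet balls discovered from $\rho$: the root is the (possibly empty) ball first covering $\rho$, and the children of a node $B_{R_x}(x)$ are the wet balls revealed in order to explain the vertices it has just added to $W_\rho$. Since each node $v \in \mathcal{T}$ contributes at most $c_{R_v}$ new vertices, $|W_\rho| \leq \sum_{v \in \mathcal{T}} c_{R_v}$, so it is enough to bound an exponential moment of this weighted total progeny. Conditionally on a parent ball of radius $R$, its weighted offspring decomposes, by independence of the $(\sigma_x, R_x)$, into at most $c_R$ independent per-vertex contributions; applying the inequality $1+u \leq e^u$ to the Bernoulli-indicator moment generating function and summing over possible centres $x$ yields, for every $\lambda \in (0, t]$,
\[
    \E\!\left[\exp\!\Bigl(\lambda \sum_{\text{children at } y} c_{R_{\text{child}}}\Bigr)\right] \leq \exp\!\bigl(p\, \E\bigl[(e^{\lambda c_R}-1)\, c_R^\top\bigr]\bigr) \leq \exp\!\bigl(p\, \E[\psi_\lambda(R)]\bigr),
\]
which is finite under the hypothesis $\E[\psi_t(R)] < \infty$. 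Subcriticality of the mean offspring count (ensured by $p\, \E[\varphi(R)] < 1$, as already extracted in the proof of Theorem \ref{thm_pc>0}), combined with this exponential-moment bound, yields via a standard generating-function fixed-point argument (as in Athreya--Ney) that $\E\bigl[\exp(\lambda \sum_v c_{R_v})\bigr] < \infty$ for some small $\lambda > 0$. Markov's inequality then gives $\P(|W_\rho| > n) \leq e^{-\lambda n}$ after a slight decrease of $\lambda$.

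For the final assertion, the crude bounds $c_r, c_r^\top \leq 2 \Delta^{r-1}$ imply that the terms of $\psi_t(n)$ grow super-exponentially in $r$, so $\psi_t(R) \leq C\, \Delta^{R-1} e^{2t \Delta^{R-1}}$, and the polynomial prefactor is absorbed by a slight shrink of $t$, giving $\E[\psi_t(R)] < \infty$ whenever $\E[e^{2t\Delta^R}] < \infty$. The main obstacle I expect is the exponential-moment step itself: because a wet ball of radius $R$ can spawn further balls of potentially much larger radius, the generating function of the weighted total progeny does not satisfy a clean single-type branching identity, and one must set up the fixed-point inequality carefully enough that subcriticality and exponential integrability combine into a genuine contraction. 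The factor $e^{tc_r}$ in $\psi_t$ is precisely what encodes the exponential cost of each possible offspring radius and makes this work.
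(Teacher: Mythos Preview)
Your overall strategy matches the paper's: explore $W_\rho$ in layers, dominate the layer sizes by a Galton--Watson process whose reproduction law $\xi$ counts the vertices added in one step, check that $\E[\xi]<1$ under the hypothesis of Theorem~\ref{thm_pc>0} and that $\E[e^{t\xi}]<\infty$ under $\E[\psi_t(R)]<\infty$, then invoke the classical fact that a subcritical branching process with exponential offspring moments has an exponential tail for its total progeny.

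The genuine gap is the independence step. You write that the weighted offspring ``decomposes, by independence of the $(\sigma_x,R_x)$, into at most $c_R$ independent per-vertex contributions''. This fails in the naive model: if two vertices $y_1,y_2$ of the current layer both lie on the boundary of a candidate child ball $B_r(x)$, then the contributions at $y_1$ and at $y_2$ both depend on the \emph{same} pair $(\sigma_x,R_x)$, so they are correlated; the same issue arises across layers. The obstacle you flag (unbounded offspring radii spoiling a clean single-type identity) is secondary; the structural problem is that the randomness sits at centres while the exploration discovers balls through their boundaries.

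The paper resolves this by first rewriting the model as a Poisson point process on $V\times\N^*$, and then transferring the randomness to \emph{doubly marked} balls $(B,x,y)\in\B^\bullet_\bullet$ with $y\in\partial B$: each such object carries its own independent Poisson variable $\omega_3(B,x,y)$, and the true wet balls are recovered by selecting, for each $(B,x)$, a distinguished boundary vertex $y_{\min}$ determined by the exploration. Since the exploration at a vertex $y$ now only reads the variables $\omega_3(\,\cdot\,,\,\cdot\,,y)$, contributions from distinct $y$'s are genuinely independent, and one obtains a clean coupling $|\C_n|\le Z_n$ with a single-type Galton--Watson process of reproduction law $\xi=\sum_{r\ge1}c_r\sum_{j=1}^{\max(\Delta s_{r-1}^\top,c_r^\top)}\ind{Z^{j,r}>0}$. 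The exponential moment of $\xi$ is then a direct product computation giving $\log\E[e^{t\xi}]\le \Delta\sum_{r\ge0} c_{r+1}^\top e^{tc_{r+1}}(1-q_r)=p\Delta\,\E[\psi_t(R)]$, via the same summation-by-parts identity used for Theorem~\ref{thm_pc>0}. Your treatment of the final assertion is morally the paper's, though ``polynomial prefactor'' is a misnomer (the prefactor $\Delta^{R-1}$ is exponential in $R$); the paper makes the absorption precise through the telescoping bound $\Delta^n e^{2t\Delta^n}\le (e^{2t\Delta^{n+1}}-e^{2t\Delta^n})/(2t(\Delta-1))$.
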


\subsection{Reformulation of the main results}
The wet set defined in \eqref{W(G)} can be rewritten as
$$W=\bigcup_{x \in V} B_{\sigma_x R_x} (x).$$
To be consistent with this reformulation, for all $n\geq 0$, let $q_n:=1-p \sum_{k>n} \nu(k)$ be the probability that $\sigma_x R_x$ is at most $n$.

We derive Theorems \ref{thm_pc>0} and \ref{thm_expo_decay} from proving the following result, stated in this new setting for directed graphs.

\begin{thm}\label{boolean_perco}
    Let $G$ be an infinite directed graph of bounded degree.
    \begin{enumerate}
        \item If $(q_n)$ satisfies
        \begin{equation}\label{condition_subcriticality}
            \Delta\sum_{n \geq 0} c_{n+1} c_{n+1}^\top (1-q_n) < 1,
        \end{equation}
    then almost surely all connected components of $W(G)$ are finite.
        \item Moreover, if \eqref{condition_subcriticality} holds and if for some $t>0$
        \begin{equation}\label{condition_expodecay}
            \sum_{n \geq 0} c^\top_{n+1} e^{tc_{n+1}} (1-q_n) < \infty,
        \end{equation}
    then there exists $\lambda>0$ such that for every $n \in \N$,
    $$\P(|W_\rho| > n) \leq e^{-\lambda n}.$$ 
    \end{enumerate}
\end{thm}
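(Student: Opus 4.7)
The plan is to prove both parts simultaneously via a first-moment Peierls-type argument based on counting chains of wet balls, followed for part (2) by a stochastic domination by a Galton--Watson tree. Call a \emph{chain of length $k$} rooted at $\rho$ a sequence $(x_1, r_1), \dots, (x_k, r_k)$ with distinct centers $x_i$ and radii $r_i \geq 1$ such that $\sigma_{x_i} R_{x_i} \geq r_i$ for every $i$, $\rho \in B_{r_1}(x_1)$, and $B_{r_i}(x_i) \cap B_{r_{i+1}}(x_{i+1}) \neq \emptyset$ for every $i < k$. The first observation I would check is that on the event $\{|W_\rho| = \infty\}$, a breadth-first exploration of $W_\rho$ through the graph of wet balls yields, almost surely, chains of arbitrarily large length (since each ball is a.s.~finite). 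So it suffices to prove that the expected number $N_k$ of chains of length $k$ tends to $0$ as $k \to \infty$.

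The core quantitative step is the bound $\E[N_k] \leq A \cdot M^{k-1}$ with $A := \sum_{n \geq 0} c^\top_{n+1}(1-q_n)$ and $M := \sum_{n \geq 0} c_{n+1} \max(\Delta s_n^\top, c_{n+1}^\top)(1-q_n)$. For fixed radii $r_1, \dots, r_k$, the events $\{\sigma_{x_i} R_{x_i} \geq r_i\}$ are independent (by distinctness of the $x_i$) with probabilities $1 - q_{r_i - 1}$, so the rest reduces to counting admissible center configurations. The constraint $\rho \in B_{r_1}(x_1)$ leaves at most $c^\top_{r_1}$ choices of $x_1$, and for $i \geq 2$ a short geometric case analysis --- distinguishing whether $x_i$ lies inside $B_{r_{i-1}}(x_{i-1})$ or reaches it only through its outer boundary $\partial^+ B_{r_{i-1}}(x_{i-1})$ --- shows that $x_i$ lies in a set of size at most $c_{r_{i-1}} \cdot \max(\Delta s^\top_{r_i - 1}, c^\top_{r_i})$; this is where the sharpened factor with $\Delta s_n^\top$ comes from, as opposed to the naive $c_{r_{i-1}} c^\top_{r_i}$. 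Summing over the radii and reindexing $n_i := r_i - 1$ makes the bound telescope into the promised product, so hypothesis \eqref{condition_subcriticality} ($M<1$) forces $\E[N_k] \to 0$ and proves part (1).

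For part (2), I would upgrade the first-moment computation to a stochastic domination: the rooted tree of distinct wet balls visited during the exploration of $W_\rho$ is dominated by a Galton--Watson tree whose offspring distribution has mean $M<1$, and in which the weight carried by each ball --- namely the size $|B_{R_x}(x)| \leq c_{R_x}$ --- has a finite exponential moment thanks to \eqref{condition_expodecay}. Since $|W_\rho|$ is at most the total weight of this tree, a classical generating-function computation for subcritical Galton--Watson processes with exponentially-integrable node weights shows that $\E[z^{|W_\rho|}]$ is finite for some $z>1$, and Markov's inequality yields the exponential tail.

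The hard point I anticipate is the correct set-up of the exploration underlying this domination: a given vertex $y$ can be relevant for several intersection checks (the same center may be an offspring candidate of several different balls), so the pair $(\sigma_y, R_y)$ has to be revealed only the first time $y$ is queried, and one must organise a queue of pending centers so that the offspring of distinct balls in the exploration are conditionally independent and the comparison to a genuine Galton--Watson process is legitimate. I expect this is precisely the point where the "mistake in the exploration procedure" acknowledged by the author had to be corrected, and care is required to genuinely preserve distinctness of the $x_i$'s so that the independence exploited in the chain computation of part (1) remains valid.
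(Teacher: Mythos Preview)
Your chain condition in Part (1) has a genuine gap: requiring $B_{r_i}(x_i)\cap B_{r_{i+1}}(x_{i+1})\neq\emptyset$ is too restrictive to capture connectivity of $W_\rho$. Two wet balls that are disjoint as vertex sets but joined by an edge of $G$ lie in the same connected component of $W$, yet your chains cannot link them. Concretely, on $G=\Z$ with all radii equal to $1$ every wet ball is a singleton $\{x\}$; then $W_\rho$ can be an infinite interval of active sites while no chain of length $\geq 2$ exists under your definition, so the implication ``$|W_\rho|=\infty\Rightarrow$ arbitrarily long chains'' fails. The repair is to use the boundary relation $B_{r_{i-1}}(x_{i-1})\cap\partial B_{r_i}(x_i)\neq\emptyset$ (your own case analysis invoking $\partial^+ B_{r_{i-1}}$ and the factor $\Delta s^\top_{r_i-1}$ already points this way: $\Delta s^\top_{r-1}$ bounds the number of centres whose ball of radius $r$ has a \emph{prescribed boundary vertex}, not a prescribed interior vertex). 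One then needs the extra step, carried out in the paper, of showing that an infinite $W_\rho$ really produces arbitrarily long chains of this boundary type with pairwise distinct centres; this uses that any chain can be taken with no ball contained in another, whence ball intersection forces boundary intersection.

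Once fixed, your first-moment route for Part (1) is a legitimate alternative to the paper's proof, which instead handles both parts simultaneously by a Galton--Watson comparison. For Part (2) your sketch coincides with the paper's strategy, but the independence difficulty you flag is precisely where the paper invests its main technical idea, and your proposal ``reveal $(\sigma_y,R_y)$ only the first time $y$ is queried'' does not resolve it: the same centre $x$ is a candidate offspring of every vertex in $\partial B_{R_x}(x)$, so offspring of distinct explored vertices are \emph{not} conditionally independent in the original model. The paper's device is to first realise the model as a Poisson point process on balls, and then to thicken it to a PPP on \emph{boundary-marked} balls $(B,x,y)$ with $y\in\partial B$, each carrying its own independent Poisson variable; the exploration and the coupling $(\omega_2,\omega_3)$ are built so that the layer $\C_{n+1}$ depends only on the Poisson variables attached to vertices of $\C_n$, which are disjoint across $n$. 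Without this decoupling, the Galton--Watson domination underlying your Part (2) is not justified.
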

Conditions \eqref{condition_subcriticality} and \eqref{condition_expodecay} are those that arise from the proof in Section \ref{Proof of Theorem}. We verify that they are implied by $\E[\varphi(R)] < \infty$ and $\E[\psi_t(R)] < \infty$ respectively if $p$ is small enough.

\begin{proof}[Proof of Theorem \ref{thm_pc>0}]
    We use the fact that for any increasing function $f: \N \to \R_+$ such that $\E[f(R)] < \infty$, we have
    \begin{equation}\label{E[f(R)]}
        \sum_{n \geq 1} f(n) \nu(n) = f(0) + \frac{1}{p}\sum_{n \geq 0} [f(n+1) - f(n)] (1-q_n).
    \end{equation}
    This is obtained by noticing that $\nu(n) = \dfrac{(1-q_{n-1})-(1-q_n)}{p}$.
    Applying \eqref{E[f(R)]} with $f=\varphi$ gives
    $$p\E[\varphi(R)]=\Delta \sum_{n \geq 0} c_{n+1}c_{n+1}^\top (1-q_n).$$
    which is less than 1 as soon as $p<\E[\varphi(R)]^{-1}$.
    Then, part 1 of Theorem \ref{boolean_perco} implies that there is almost surely no percolation for such a $p$, hence the lower bound on $p_c(G, \nu)$.
\end{proof}

\begin{proof}[Proof of Theorem \ref{thm_expo_decay}]
    We use the same identity \eqref{E[f(R)]} with $f=\psi_t$, where $t>0$ is such that $\E[\psi_t(R)] < \infty$.
    It gives
    $$p\E[\psi_t(R)] = \sum_{n \geq 0} c^\top_{n+1} e^{tc_{n+1}} (1-q_n),$$
    so that part 2 of Theorem \ref{boolean_perco} gives the exponential decay.

    Then, since $e^{2t \Delta^{n+1}}-e^{2t \Delta^n} =e^{2t \Delta^n} \left(e^{2t \Delta^n(\Delta-1)}-1\right) \geq e^{2t \Delta^n} \Delta^n 2t(\Delta-1)$, we obtain
    \begin{align*}
        p\E[\psi_t(R)] &\leq \sum_{n \geq 0} 2\Delta^n e^{2t\Delta^n} (1-q_n) \\
        &\leq \dfrac{1}{t(\Delta-1)} \sum_{n \geq 0} \left(e^{2t \Delta^{n+1}}-e^{2t \Delta^n}\right)(1-q_n)\\
        &\leq \dfrac{p}{t(\Delta-1)} \E\left[e^{2t\Delta^R}\right],
    \end{align*}
    where in the last inequality we apply \eqref{E[f(R)]} with $f(n)=e^{2t\Delta^n}$.
    Thus, $\E\left[e^{2t\Delta^R}\right] < \infty$ implies that $\E[\psi_t(R)]<\infty$.
\end{proof}

\section{Proof of Theorem \ref{boolean_perco}}\label{Proof of Theorem}
The proof of Theorem \ref{boolean_perco} will be based on an exploration algorithm of the wet connected component of the root, which we describe informally here.
First, we discover all the wet balls containing $\rho$.
This forms our first layer of vertices contained in $W_\rho$.
Then, assuming that the $n$-th layer has been constructed, the $(n+1)$-th layer consists of the union of wet balls whose boundary contains some vertex of the $n$-th layer.
Note that it is not necessary for a wet ball to share a vertex with the previous layer in order to be added to $W_\rho$, only the existence of an edge (in any direction) between the two is required.
If it is finite, $W_\rho$ will be completely revealed after finding an empty layer (otherwise the algorithm never ends).
To prove finiteness and exponential decay, we will compare the size of the layers with a subcritical Galton-Watson process.

The randomness of the appearance of the balls in \eqref{W(G)} is supported by their center.
Indeed, a given ball is said to be a wet ball if it is equal to $B_{R_x}(x)$ for some $x \in V$.
To obtain a domination by a Galton-Watson process, it requires more independence; more precisely, it would be preferable for the events that a given ball is a wet ball to be independent.
This can be achieved by realising the model as a Poisson Point Process (PPP) of balls.
Furthermore, since the balls are discovered during the exploration by their boundary, we want to place the randomness of the appearance of a ball at its boundary.
In Section \ref{Realisation of the model as a PPP}, we first realise the model as a PPP on $V \times \N^*$, where the randomness will still come from the centers of the balls.
Then, in Section \ref{Realisation of the model as PPP of marked balls} we transfer the randomness to the boundary by introducing PPP of marked balls.
Finally, we conclude the proof of Theorem \ref{boolean_perco} in Section \ref{Comparison with a Galton-Watson process} by showing domination by a subcritical Galton-Watson process.

\subsection{Realisation of the model as a PPP on $V \times \N^*$}\label{Realisation of the model as a PPP}
We first define a version of the model by introducing a PPP of a well chosen intensity.
Let $\nu_1$ be the measure on $V \times \N^*$ such that for every $(x,n) \in V \times \N^*$, $\nu_1(\{(x,n)\}) = \lambda_n:= \log(\frac{q_n}{q_{n-1}})$.
Let $\omega_1$ be a PPP of intensity $\nu_1$, i.e. for every $(x,n) \in V \times \N^*$, if we set $\omega_1(x,n):=|\omega_1 \cap \{(x,n)\}|$, then $\omega_1(x,n)$ has Poisson distribution of mean $\lambda_n$ and the $\omega_1(x,n)$ for $(x,n) \in V \times \N^*$ are independent.

Then, for every $x\in V$, let $Y_x:=\sup\{n\geq 1 : \omega_1(x,n) >0\}$ with the convention $Y_x=0$ if it is empty.
Thus, we have that 
\begin{align*}
    \P(Y_x \leq n) &= \P(\forall k>n, \omega_1(x,k) = 0)\\
    &= \prod_{k>n} e^{- \lambda_k}\\
    &= \prod_{k>n} \dfrac{q_{k-1}}{q_k} = q_n.
\end{align*}
Moreover, the $(Y_x)_{x \in V}$ are independent because they are functions of disjoint $\omega_1(x,n)$.
Note that here the $Y_x$ are distributed as the $\sigma_x R_x$ of the previous section. In particular, we have $\P(Y_x>0) = 1-q_0=p$.
Hence, the $(Y_x)_{x\in V}$ define a realisation of the Boolean percolation model we saw above, with wet set 
\begin{equation*}\label{wet_set}
    W= \bigcup_{x \in V} B_{Y_x}(x),
\end{equation*}
where the union is taken over all the vertices of the graph.

\subsection{Realisation of the model as PPP of marked balls}\label{Realisation of the model as PPP of marked balls}
Let $\B = \{B_r(x), x\in V, r\in \N^*\}$ be the collection of balls of the graph $G$. 
There is a canonical surjection from $V \times \N^*$ to $\B$ given by
\begin{equation*}
    \tilde \Phi : \fonction{V \times \N^*}{\B}{(x,r)}{B_r(x)}.
\end{equation*}
Unfortunately, $\tilde \Phi$ is not a bijection, because we could have, for example, $B_n(x) = B_n(y)$ for different $x$ and $y$.
This is why we need to work with balls with marked center, and define $\B^\bullet = \{(B_r(x),x), x\in V, r \in \N^*\}$.
For $B^\bullet =(B,x) \in \B^\bullet$, let $\rad(B^\bullet) = \sup_{y \in B} d(x,y)+1$ be the radius of the ball.
We will use the following convention: if $B^\bullet$ is an element of $\B^\bullet$, $B$ will always denote its first coordinate (i.e. the corresponding ball of $\B$).

We now have a canonical surjection from $V \times \N^*$ to $\B^\bullet$ given by
\begin{equation*}
    \Phi : \fonction{V \times \N^*}{\B^\bullet}{(x,r)}{(B_r(x),x)}.
\end{equation*}
It is still not a bijection because we have $B_r(x) = B_{r+1}(x)$ if $\partial^+ B_r(x) = \emptyset$.
However, we can now easily write $\Phi^{-1}(\{B^\bullet\})$ for any $B^\bullet =(x,B) \in \B$:
we have $\Phi^{-1}(\{B^\bullet\}) = \{x\} \times  \llbracket \rad(B^\bullet), \infty\llbracket$ if $\partial^+B = \emptyset$, and otherwise $\Phi^{-1}(\{B^\bullet\}) = (x, \rad(B^\bullet))$.

Now, we can pushforward $\omega_1$ by $\Phi$, to get a PPP on $\B^\bullet$ of intensity $\nu_2 = \Phi_* \nu_1$, i.e.
$$\nu_2(\{B^\bullet\}) = \cas{\sum_{n \geq \rad(B^\bullet)} \lambda_n}{\si \partial^+B= \emptyset}{\lambda_{\rad(B^\bullet)}}{\text{ otherwise}} \text{ for all }B^\bullet \in \B^\bullet.$$

As stated before, we aim to put the randomness on the boundary of the balls.
This is why we will also consider balls with a marked vertex on their boundary.
Let $\B^\bullet_\bullet= \{(B_r(x),x,y), x\in V, r \in \N^*, y\in \partial B_r(x)\}$ and define $\nu_3$ a measure on $\B^\bullet_\bullet$ by
$$\nu_3(\{B^\bullet_\bullet\}) = \cas{\sum_{n \geq \rad(B^\bullet_\bullet)} \lambda_n}{\si \partial^+B= \emptyset}{\lambda_{\rad(B_\bullet^\bullet)}}{\text{ otherwise}} \text{ for all }B^\bullet_\bullet=(B,x,y) \in \B^\bullet_\bullet,$$
where $\rad(B^\bullet_\bullet)$ is defined in the same way as $\rad(B^\bullet)$.

By defining more precisely the exploration layers mentioned above,
we now construct a coupling $(\omega_2, \omega_3)$ of PPP of intensity respectively $\nu_2$ and  $\nu_3$.
For this, we will need an auxiliary total order on $V$, which is chosen arbitrarily.

First, for all $(B,x,y) \in \B^\bullet_\bullet$ such that $\rho \in  B$, simply let $\omega_3\big((B,x,y)\big)$ be independent Poisson variables of parameters $\nu_3(\{(B,x,y)\})$,
and let
$$\omega_2\big((B,x)\big) = \omega_3\big((B,x,y_{\min})\big),$$
where $y_{\min}$ is the minimal element of $\partial B$ for the auxiliary order of the vertices.
Let $$\ds \C_1 = \bigcup_{\omega_2(\B^\bullet)>0 \atop \rho \in B}B.$$

Then, successively for every $n\geq 1$, construct the $\omega_3\big((B,x,y)\big)$ of all $(B,x,y) \in \B^\bullet_\bullet$
such that $\partial B \cap \C_n \neq \emptyset$ and that were not previously defined:
independently of the previous draws, let $\omega_3\big((B,x,y)\big)$ be independent Poisson variables of parameters $\nu_3(\{(B,x,y)\})$, and let
$$\omega_2\big((B,x)\big) = \omega_3\big((B,x,y_{\min})\big),$$
where $y_{\min}$ is the minimal element of $\partial B \cap \C_n$ for the auxiliary order.
Let $$\C_{n+1} = W(\C_n) - \bigcup_{k\leq n} \C_k,$$
where for every $C \subset V$, $$W(C):= \bigcup \{B \mid \omega_3\big((B,x,y)\big)>0 \text{ for some } x\in V \et y\in C \}.$$

Finally, for all the $\omega_3\big((B,x,y)\big)$ that have not yet been defined, draw again independently Poisson variables of parameter $\nu_3(\{(B,x,y)\})$,
and simply take
$$\omega_2\big((B,x)\big) =\omega_3\big((B,x,y_{\min})\big)$$
as in the first step, $y_{\min}$ being the minimal element of $\partial B$.

Therefore, it appears that $\omega_2$ and $\omega_3$ are respectively distributed as PPP of intensity $\nu_2$ and $\nu_3$.
Since $\omega_2$ has the same distribution as $\Phi(\omega_1)$, it can be coupled with $\omega_1$ in such a way that a.s.  $\Phi(\omega_1)=\omega_2$.

Now, rewrite the wet set $W$ in terms of this new PPP of marked balls.
\begin{lemme}
    Almost surely for the coupling $(\omega_1, \omega_2, \omega_3)$, we have
    \begin{equation}\label{eq:W_with_omega}
        W= \bigcup_{B^\bullet \in \B^\bullet, \omega_2(B^\bullet)>0} B.
    \end{equation}
    
\end{lemme}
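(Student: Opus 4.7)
The plan is to work on the almost sure event where $\Phi(\omega_1)=\omega_2$, which was arranged by construction, and then to verify the two inclusions in \eqref{eq:W_with_omega} directly from the definitions of $Y_x$, $\Phi$, and $\omega_2$. The key structural fact I would highlight first is that the fiber $\Phi^{-1}(\{B^\bullet\})$ above any $B^\bullet=(B,x)$ is contained in $\{x\}\times \N^*$, so the only potential \emph{center} of a ball appearing as a marked ball $B^\bullet$ is the marked vertex $x$ itself; this is what forces both equalities to reduce to a statement about a fixed center.

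For the forward inclusion $W \subseteq \bigcup_{\omega_2(B^\bullet)>0} B$, I would take $v \in W$, pick $x$ with $v \in B_{Y_x}(x)$ and $Y_x \geq 1$, and then note that by definition of $Y_x$ we have $\omega_1(x,Y_x)>0$. Setting $B^\bullet := \Phi(x,Y_x) = (B_{Y_x}(x),x)$, the pushforward identity $\omega_2 = \Phi_\ast \omega_1$ gives $\omega_2(B^\bullet) \geq \omega_1(x,Y_x) > 0$, and $v \in B_{Y_x}(x) = B$, concluding this direction.

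For the reverse inclusion, suppose $\omega_2(B^\bullet)>0$ for some $B^\bullet=(B,x)$ and take $v\in B$. Since $\omega_2 = \Phi_\ast \omega_1$ on the coupling event, there exists $(x',r)\in \Phi^{-1}(\{B^\bullet\})$ with $\omega_1(x',r)>0$. By the explicit description of $\Phi^{-1}$ recalled just before the lemma, necessarily $x'=x$ and $B_r(x)=B$ (using either $r=\rad(B^\bullet)$ or $r\geq \rad(B^\bullet)$ in the case $\partial^+B=\emptyset$, in which $B_r(x)=B$ for all such $r$ anyway). Then $Y_x \geq r \geq 1$ and $v \in B = B_r(x) \subseteq B_{Y_x}(x) \subseteq W$.

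I do not expect any real obstacle here: once the a.s. coupling $\Phi(\omega_1)=\omega_2$ is invoked, the lemma is essentially a bookkeeping check that the surjection $\Phi$ does not lose information about which balls are wet. The only mildly delicate point to handle carefully is the case $\partial^+B=\emptyset$, where the fiber $\Phi^{-1}(\{B^\bullet\})$ is infinite and several values of $r$ give the same ball $B$; but since all such $r$ correspond to the same set $B_r(x)=B$, the argument above goes through unchanged.
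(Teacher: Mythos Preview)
Your proof is correct and follows essentially the same approach as the paper: work on the almost sure event where $\omega_2=\Phi_*\omega_1$, then verify both inclusions directly from the definitions of $Y_x$ and $\Phi$. The only cosmetic difference is that in the forward inclusion you take $n=Y_x$ (using that the supremum defining $Y_x$ is a.s.\ attained), whereas the paper picks any $n>d(x,v)$ with $\omega_1(x,n)>0$; either choice works for the same reason.
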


\begin{proof}
The coupling have been defined in such a way that a.s. for all $B^\bullet \in \B^\bullet$, we have
$$\sum_{(x,n)\in \Phi^{-1}(\{B^\bullet\})} \omega_1(x,n) = \omega_2(B^\bullet).$$
Let us check both inclusions of \eqref{eq:W_with_omega}.
First, if $y\in W$, then there exists $x\in V$ such that $y \in B_{Y_x}(x)$ i.e. $d(x,y) < Y_x$.
By definition of $Y_x$, it means that there exists $n>d(x,y)$ such that $\omega_1(x,n)>0$. Let $B^\bullet=\Phi(x,n)$; then $y\in B$ and $\omega_2(B^\bullet) \geq \omega_1(x,n)>0 $.

On the other hand, suppose $y\in B$ for some $B^\bullet$ with $\omega_2(B^\bullet)>0$; thus, there exists $(x,n)\in \Phi^{-1}(\{B^\bullet\})$ such that $\omega_1(x,n)>0$. Since $y\in B$, it implies that $d(x,y)<n \leq Y_x$, so $y\in B_{Y_x}(x) \subset W$.
\end{proof}

Then, notice that the wet connected component of the root is entirely discovered by our exploration.
\begin{lemme}\label{inclusion_of_W_0}
    Almost surely for the coupling $(\omega_1, \omega_2, \omega_3)$, we have
    $$ W_\rho \subset \bigcup_{n\geq 1} \C_n.$$
\end{lemme}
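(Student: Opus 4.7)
The plan is to proceed by strong induction on $d_W(\rho, y)$, the length of a shortest path from $\rho$ to $y$ in the underlying undirected graph of $G$ using only vertices of $W$. On the almost-sure event where \eqref{eq:W_with_omega} holds, every vertex of $W$ lies in at least one wet ball $B^\bullet$ (i.e.\ some $B^\bullet$ with $\omega_2(B^\bullet) > 0$). The base case $d_W(\rho, y) = 0$ forces $y = \rho$, and the defining formula for $\C_1$ ensures that any wet ball through $\rho$ is entirely contained in $\C_1$, so $\rho \in \C_1$.

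For the inductive step at distance $k \geq 1$, I fix a shortest path $\rho = v_0, v_1, \dots, v_k = y$ in $W$ and a wet ball $B^\bullet$ with $y \in B$. If $\rho \in B$ the argument of the base case again gives $y \in \C_1$, so assume $\rho \notin B$. Let $j \in \{1, \dots, k\}$ be the smallest index with $v_j \in B$ and set $u := v_{j-1}$; by minimality, $u \notin B$, and since $u$ is adjacent to $v_j \in B$, we have $u \in \partial B$. The initial subpath $\rho, v_1, \dots, u$ lies in $W$ and has length $j - 1 < k$, so $d_W(\rho, u) < k$ and $u \in W_\rho$. The induction hypothesis then gives $u \in \C_m$ for some $m \geq 1$, and hence $\partial B \cap \C_m \neq \emptyset$.

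It remains to conclude that the whole of $B$ lies in $\bigcup_{n \geq 1} \C_n$, and this is the only step that genuinely uses the fine structure of the coupling. Let $m^*$ be the smallest index with $\partial B \cap \C_{m^*} \neq \emptyset$, so $m^* \leq m$; by construction, $B^\bullet$ is processed at step $m^*$, with $\omega_2\bigl((B,x)\bigr) = \omega_3\bigl((B,x,y_{\min})\bigr)$ where $y_{\min}$ is the minimal element of $\partial B \cap \C_{m^*}$ for the auxiliary order. Since $B^\bullet$ is wet, $\omega_2\bigl((B,x)\bigr) > 0$, and therefore $\omega_3\bigl((B,x,y_{\min})\bigr) > 0$ with $y_{\min} \in \C_{m^*}$; by definition of $W(\C_{m^*})$ this forces $B \subset W(\C_{m^*}) \subset \A_{m^* + 1}$, whence $y \in B \subset \bigcup_{n \geq 1} \C_n$. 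The main obstacle is precisely this last bookkeeping: verifying that positivity of $\omega_2$ is witnessed by an $\omega_3$-coordinate whose distinguished boundary vertex lies in an already-revealed layer, which is exactly what the design of the coupling guarantees.
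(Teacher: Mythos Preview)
Your proof is correct and runs on the same engine as the paper's: both hinge on the observation that if a wet ball $B^\bullet$ has $\partial B$ meeting some $\C_n$ (and $\rho\notin B$), then the coupling forces $\omega_2(B^\bullet)=\omega_3((B^\bullet,y_{\min}))$ with $y_{\min}\in\C_n$, so $B\subset W(\C_n)\subset\bigcup_{k\le n+1}\C_k$. The paper organizes this via a chain of wet balls $B_1^\bullet,\dots,B_k^\bullet$ from $\rho$ to the target and argues by contradiction on the first ball of the chain not contained in $\bigcup_n\C_n$; you instead induct directly on the $W$-distance from $\rho$ and, for each vertex, pick one wet ball through it and locate a boundary vertex at strictly smaller $W$-distance. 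These are two presentations of the same idea.

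One small correction: the symbol $\A_{m^*+1}$ is undefined in the paper. What you need is $W(\C_{m^*})\subset\bigcup_{k\le m^*+1}\C_k$, which follows from $\C_{m^*+1}=W(\C_{m^*})\setminus\bigcup_{k\le m^*}\C_k$.
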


\begin{proof}
    Let $z\in W_\rho$.
    According to the identity \eqref{eq:W_with_omega}, there exists a chain of balls $B_1^\bullet, \dots, B_k^\bullet$ such that $\rho \in B_1$ and $z\in B_k$, and $$\forall i \in \{1, \dots,k\},\ \omega_2(B_i^\bullet)>0 \text{ and for }  i<k,\  B_i \cap (B_{i+1} \cup \partial B_{i+1}) \neq \emptyset.$$
    Furthermore, we can assume that $B_j \not \subset B_i$ for all $i\neq j$, i.e. no ball of the chain is included in a different ball.
    We argue that this implies $B_i \cap \partial B_{i+1} \neq \emptyset$ for all $i$.
    In fact, if there exists $y \in B_i \cap B_{i+1}$, take $x \in B_i - B_{i+1}$; since $B_i$ is connected, there is a path from $y$ to $x$ that stays inside $B_i$ ; but $x \notin B_{i+1}$ so the path necessarily leaves $B_{i+1}$, so it intersects $\partial B_{i+1}$; hence, it provides a vertex in $B_i \cap \partial B_{i+1}$.
    
    Assume that $B^\bullet_k$ is not discovered in the exploration, so $B_k \not \subset \bigcup_{n\geq 1} \C_n$.
    Therefore, there exists a first ball $B_{i+1}^\bullet$ of the chain such that $B_{i+1} \not \subset \bigcup_{n\geq 1} \C_n$. This first ball cannot be $B_1^\bullet$ since $B_1 \subset \C_1$ by definition of $\C_1$.
    By assumption on the chain, $\partial B_{i+1}$ intersects $B_i$, so the reunion $\bigcup_{n\geq 1} \C_n$ since it contains $B_i$.
    It means that $\C_n \cap \partial B_{i+1} \neq \emptyset$ for some $n\geq 1$ (take $n$ minimal for this property).
    According to the construction of the coupling, this induces
    $$\sum_{y \in \partial B_{i+1} \cap \C_n} \omega_3\big((B_{i+1}^\bullet, y)\big) \geq \omega_2(B^\bullet_{i+1})>0.$$
    Thus, $\omega_3\big((B_{i+1}^\bullet, y)\big)$ is positive for some $y\in \partial B_{i+1} \cap \C_n$, so $B_{i+1} \subset W(\C_n) \subset \bigcup_{n\geq 1} \C_n$, which yields a contradiction. Hence $z\in B_k \subset \bigcup_{n\geq 1} \C_n$.
\end{proof}

In the last paragraph, we show that $(|\C_n|)$ is dominated by a subcritical Galton-Watson process.
Together with the result of Lemma \ref{inclusion_of_W_0}, it concludes the proof of Theorem \ref{boolean_perco}.

\subsection{Comparison with a Galton-Watson process}\label{Comparison with a Galton-Watson process}

For every vertex $y\in V$, define the random variable
$$\xi_y := \sum_{B^\bullet_\bullet = (B, \cdot, y)} |B| \ind{\omega_3(B^\bullet_\bullet) >0},$$
which is an upper bound for the number of vertices in $W(\{y\})$.
The $\xi_y$ for $y\in V$ are independent since they concern disjoint $\omega_3(\cdot)$.
For the root, we also define
$$\tilde \xi_\rho := \sum_{B^\bullet, \rho \in B} |B| \ind{\omega_2(B^\bullet) >0},$$
which is an upper bound for $|\C_1|$.
Both the $\xi_y$ and $\tilde \xi_\rho$ are stochastically dominated by the same distribution, that we introduce in the following lemma.
Then, we show that $(|\C_n|)$ is dominated by a Galton-Watson process that admits this distribution as reproduction law.

\begin{lemme}\label{lemma_domination}
    Let $(Z^{j,r})_{j\geq 1, r\geq 1}$ be a collection of independent Poisson random variables such that for all $j\geq 1$, $r \geq 1$, $Z^{j,r}$ has mean $\sum_{n \geq r} \lambda_n$.
    Set 
    \begin{equation}\label{def_xi}
        \xi:= \sum_{r\geq 1} c_r \sum_{j=1}^{\Delta  c_r^\top} \ind{Z^{j,r}>0}.
    \end{equation}
    Then, for all $y \in V$, we have the stochastic dominations
    $$\xi_y \preceq \xi \et \tilde \xi_\rho \preceq \xi.$$
\end{lemme}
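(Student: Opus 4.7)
My plan is to bucket both sums $\xi_y$ and $\tilde \xi_\rho$ by the canonical radius $r = \rad(B^\bullet)$ of the underlying ball. For each fixed $r$ I will control separately the number of marked balls contributing, the common upper bound $|B| \leq c_r$ on the ball size, and the Poisson parameter of the associated indicator. The key structural fact is that $\omega_2$ and $\omega_3$ are PPP, so the indicators $\ind{\omega_2(B^\bullet) > 0}$ and $\ind{\omega_3(B^\bullet_\bullet) > 0}$ attached to distinct marked balls (resp.\ marked triples) are independent; moreover each has Poisson parameter $\nu_2(\{B^\bullet\})$ or $\nu_3(\{B^\bullet_\bullet\})$ equal to either $\lambda_r$ or $\sum_{n \geq r} \lambda_n$, in either case at most $\sum_{n \geq r} \lambda_n = \E[Z^{j,r}]$. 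A standard coupling then dominates each such indicator by an independent $\ind{Z^{j,r} > 0}$.

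For $\tilde \xi_\rho$, the marked balls with $\rho \in B$ and $\rad(B^\bullet) = r$ are in bijection with centers $x$ such that $\rho \in B_r(x)$, equivalently $x \in B_r^\top(\rho)$, giving at most $c_r^\top \leq \max(\Delta s_{r-1}^\top, c_r^\top)$ contributions per radius; combined with $|B| \leq c_r$ and the Poisson domination above, this yields $\tilde \xi_\rho \preceq \xi$.

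For $\xi_y$, the triples $(B,x,y)$ with $\rad = r$ are parameterised by centers $x$ such that $y \in \partial B_r(x)$; I would count these by splitting according to the boundary type. If $y \in \partial^+ B_r(x)$ then $d(x,y) = r$, so a shortest directed $x$-to-$y$ path has its penultimate vertex among the $\leq \Delta$ in-neighbours of $y$ at distance exactly $r-1$ from $x$, giving at most $\Delta s_{r-1}^\top$ choices. For $y \in \partial^- B_r(x) \setminus \partial^+ B_r(x)$, there is an out-neighbour $z$ of $y$ with $x \in B_r^\top(z)$, so $x \in \bigcup_{z \in \partial^+\{y\}} B_r^\top(z) \setminus B_r^\top(y)$; a careful enumeration of this union, exploiting the overlap of the sets $B_r^\top(z)$ for distinct out-neighbours through the common piece $B_r^\top(y)$ that is precisely what is removed, should yield a count absorbed into $\max(\Delta s_{r-1}^\top, c_r^\top)$. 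Combining the two cases with the Poisson domination argument gives $\xi_y \preceq \xi$.

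The main obstacle is the combinatorial estimate for the $\partial^-$-case of $\xi_y$: the naive enumeration over out-neighbours of $y$ gives only the weaker bound $\Delta c_r^\top$, and reaching the sharper $\max(\Delta s_{r-1}^\top, c_r^\top)$ claimed in the lemma requires the overlap analysis sketched above. The probabilistic content---independence of the Poisson indicators at distinct marked balls from the PPP construction, and the stochastic domination of each indicator by $\ind{Z^{j,r} > 0}$ via the coupling of Poisson variables with increasing parameter---is entirely routine once this counting is settled.
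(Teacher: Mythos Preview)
Your overall strategy—bucketing by $r=\rad(B^\bullet)$, bounding $|B|\le c_r$, dominating each Poisson parameter by $\sum_{n\ge r}\lambda_n$, and then counting contributing marked balls—is exactly the paper's. Your treatment of $\tilde\xi_\rho$ (centers $x$ with $\rho\in B_r(x)$ are in $B_r^\top(\rho)$, hence at most $c_r^\top$) also coincides with the paper's.

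The divergence is in the count for $\xi_y$. The paper does \emph{not} split $\partial B_r(x)$ into $\partial^+$ and $\partial^-$. It argues in one line that ``$y\in\partial B_r(x)$ if and only if there is a neighbour of $y$ at distance exactly $r-1$ from $x$'', and from this reads off the bound $\Delta\,s_{r-1}^\top$ directly: at most $\Delta$ neighbours of $y$, and for each neighbour at most $s_{r-1}^\top$ possible centers $x$. In particular, the $\max(\Delta s_{r-1}^\top,c_r^\top)$ in the definition of $\xi$ is there only so that the \emph{same} $\xi$ dominates both $\xi_y$ and $\tilde\xi_\rho$; it is not used, as you suggest, to absorb a separate $\partial^-$-contribution in the $\xi_y$ count.

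So your planned ``overlap analysis'' for the $\partial^-$ case is not part of the paper's proof at all. That said, your instinct that this case is delicate is justified: the paper's ``if and only if'' is not literally correct for directed graphs when $y\in\partial^-B_r(x)\setminus\partial^+B_r(x)$ (a neighbour $z$ of $y$ with $(y,z)\in E$ and $d(x,z)<r-1$ need not force any neighbour at distance exactly $r-1$). The paper simply does not engage with this subtlety. If you want a version that is unambiguously correct, the crude bound $\Delta\,c_r^\top$ on the number of centers (at most $\Delta$ neighbours of $y$, each giving at most $c_r^\top$ candidates for $x$) goes through cleanly and, since $\max(\Delta s_{r-1}^\top,c_r^\top)\le\Delta c_r^\top\le\Delta\max(\Delta s_{r-1}^\top,c_r^\top)$, changes nothing at the level of the finiteness conditions in Theorems~\ref{thm_pc>0}--\ref{boolean_perco}.
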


\begin{proof}
    First, observe that if $B^\bullet_\bullet \in \B^\bullet_\bullet$ has radius $r\geq 1$, we have $|B| \leq c_r$.
    Then, for every $y \in V$,
    $$\xi_y \leq \sum_{r\geq 1} c_r \sum_{B^\bullet_\bullet = (\cdot, \cdot, y)} \ind{\omega_3(B^\bullet_\bullet)>0} \ind{\rad(B^\bullet_\bullet) =r}.$$
    The $\omega_3(B^\bullet_\bullet)$ that appears in the above sum are independent Poisson variables whose parameters depend on $B^\bullet_\bullet$. Since we are only looking at events of the form $\{\omega_3(B^\bullet_\bullet)>0\}$, we get an upper bound by considering the largest possible parameter. More precisely, we can construct a coupling of $\omega_3$ with $\omega_3'$, where
    the $\omega_3'(B^\bullet_\bullet)$ are independent Poisson variables of parameters $\sum_{n \geq \rad(B^\bullet_\bullet)} \lambda_n$ and such that a.s. for all $B^\bullet_\bullet \in \B^\bullet_\bullet$, $\omega_3(B^\bullet_\bullet) \leq \omega'_3(B^\bullet_\bullet)$. Then,
    \begin{equation}\label{dom_xi_y}
        \xi_y \leq \sum_{r\geq 1} c_r \sum_{B^\bullet_\bullet = (\cdot, \cdot, y)} \ind{\omega_3'(B^\bullet_\bullet)>0} \ind{\rad(B^\bullet_\bullet) =r}.
    \end{equation}
    For every $r\geq 1$, the marked balls $B^\bullet_\bullet \in \B^\bullet_\bullet$ with $\rad(B^\bullet_\bullet) =r$ that contribute in \eqref{dom_xi_y} are of the form $(B_r(x), x, y)$ for some $x \in V$ such that $y \in \partial B_r(x)$.
    Then, $y \in \partial B_r(x)$ only if there is a neighbour of $y$ at distance less than $r$ from $x$; there are at most $\Delta$ neighbours of $y$, and $c^\top_r$ possible $x$ for each of them.
    Therefore, there are at most $\Delta c^\top_r$ non-zero terms in the second sum in \eqref{dom_xi_y}, hence we get the domination by the law of $\xi$.
    We can get an explicit coupling between the two by taking, for every $r\geq 1$, the $(Z^{j,r})_{j\geq 1}$ to be the $\omega_3'(B^\bullet_\bullet)$ of the balls $B^\bullet_\bullet$ with radius $\rad(B^\bullet_\bullet) =r$, arranged in an arbitrary order, and other independent Poisson variables to complete the sequence.

    The proof for $\tilde \xi_\rho$ works similarly.
    Introduce $\omega_2'$ a collection of independent Poisson variables indexed by $\B^\bullet$ such that for all $B^\bullet \in \B^\bullet$, $\omega_2'(B^\bullet)\geq \omega_2(B^\bullet)$ a.s. and $\omega_2'(B^\bullet)$ has mean $\sum_{n\geq \rad(B^\bullet)} \lambda_n$.
    Thus,
    $$\tilde \xi_\rho \leq \sum_{r\geq 1} c_r \sum_{B^\bullet, \rho \in B} \ind{\omega_2'(B^\bullet)>0}\ind{\rad(B^\bullet) =r}.$$

    The second sum contains at most $c_r^\top$ non-zero terms, since the number of vertices $x\in V$ satisfying $\rho \in B_r(x)$ is bounded by $|B_r^\top(\rho)| \leq c_r^\top \leq \Delta c_r^\top$.
    Hence, the stochastic domination of the law of $\tilde \xi_\rho$ by the law of $\xi$.
\end{proof}

Let $(\xi_k^{(n)})_{k,n \in \N}$ be a collection of independent copies of $\xi$, $Z_0 =1$ and for every $n \in \N$, $\ds Z_{n+1}=\sum_{k=1}^{Z_n} \xi_k^{(n)}$.
Then, $(Z_n)$ is a Galton-Watson process of reproduction law the law of $\xi$.
We construct a version of $(Z_n)$ that satisfies $|\C_n| \leq Z_n$ almost surely.

First, we know by Lemma \ref{lemma_domination} that we can choose $\xi_1^{(0)}$ so that $|\C_1| \leq \xi_1^{(0)}$ a.s., by taking
$$\xi_1^{(0)} = \sum_{r\geq 1} c_r \sum_{j=1}^{\Delta c_r^\top} \ind{Z(j,r)^{(0)}_1 >0},$$
where for every $r\geq 1$, the $Z(j,r)^{(0)}_1$ are taken to be the $\omega_2'(B^\bullet)$ (defined in the proof of Lemma \ref{lemma_domination}) in arbitrary order with $\rad(B^\bullet) = r$ as well as others independent Poisson variables of parameter $\sum_{n \geq r} \lambda_n$. 

Then, assuming the coupling is defined until step $n$, i.e. we have $|\C_m| \leq Z_m$ for all $m\leq n$, construct $Z_{n+1}$ in the following way.
Call $y_1, \dots, y_{|\C_n|}$ the vertices of $\C_n$ arranged in arbitrary order.
We have $|\C_{n+1}| \leq \xi_{y_1} +\dots+ \xi_{y_{|\C_n|}}$.

For all $1 \leq k \leq |\C_n|$, let
$$\xi_k^{(n)} = \sum_{r\geq 1} c_r \sum_{j=1}^{\Delta c_r^\top} \ind{Z(j,r)^{(n)}_k >0},$$
where for every $r\geq 1$, the $Z(j,r)^{(n)}_k$ are taken to be the $\omega_3'(B^\bullet_\bullet)$ (see the proof of Lemma \ref{lemma_domination}) in arbitrary order for $\B_\bullet^\bullet$ of the form $(B, \cdot, y_k)$ with $\rad(B^\bullet_\bullet) = r$, as well as others independent Poisson variables of parameter $\sum_{n \geq r} \lambda_n$.
With this choice of $\xi_k^{(n)}$, we have a.s. $\xi_{y_k} \leq \xi_k^{(n)}$.
Moreover, it is the case that the $(\xi_k^{(n)})_{k\geq 1}$ are independent, and independent of the $\xi_k^{(m)}$ for $m<n$ because the $\omega_3'(B^\bullet_\bullet)$ collected at this step have never been revealed before; here we use the fact that the marked balls of $\B^\bullet_\bullet$ are attached to some vertex of their boundary and that the $\C_m$ are disjoint.
Note that they are also independent from the $\omega_2'(B^\bullet)$ present in the first layer, because no ball containing $\rho$ can be present in higher layers.

Therefore, we have
$$|\C_{n+1}| \leq \xi_{y_1} +\dots+ \xi_{y_{|\C_n|}} \leq \xi_1^{(n)} + \dots + \xi_{|\C_n|}^{(n)} \leq \xi_1^{(n)} + \dots + \xi_{Z_n}^{(n)} =Z_{n+1}, $$
where the $\xi_{k}^{(n)}$ for $k>|\C_n|$ are i.i.d. copies of $\xi$.

This yields a coupling such that almost surely $|\C_n| \leq Z_n$ for all $n\geq 1$.
We conclude the proof of Theorem \ref{boolean_perco} using well-known properties of the Galton-Watson processes.

Recalling the definition of $\xi$ from \eqref{def_xi},
we have $$\P(Z^{r,j}>0) = 1-e^{-\sum_{k\geq r} \lambda_k} = 1-\prod_{k\geq r} e^{-\lambda_k} = 1-q_{r-1},$$ hence
$$\E[\xi] = \sum_{r\geq 0} c_{r+1} \Delta c_{r+1}^\top (1-q_r).$$
It is well known that if $\E[\xi] < 1$, the Galton-Watson process $(Z_n)$ extincts almost surely.
Therefore, we also get under that condition that almost surely $\C_n$ is empty for large enough $n$, so $W_\rho$ is finite.
Since the graph is countable and $\rho$ was chosen arbitrarily, then almost surely the wet set $W$ does not contain any infinite connected component.
This proves part 1. of Theorem \ref{boolean_perco}.

For the second part, we use the fact (see \cite{LyonsPeres} Exercise 5.33) that if the reproduction law of a subcritical Galton-Watson process admits finite exponential moment, then the total size of the process has an exponential tail.
For all $t>0$,
\begin{align*}
    \E[e^{t\xi}] &= \E \left[\prod_{r\geq1}\prod_{j=1}^{\Delta  c_r^\top} e^{t c_r \ind{Z^{r,j}>0}} \right]\\
    &=\prod_{r\geq1}\prod_{j=1}^{\Delta c_r^\top} \E[e^{tc_r \ind{Z^{r,j}>0}}]\\
    &=\prod_{r\geq1}\left(e^{t c_r}(1-q_{r-1})+q_{r-1}\right)^{\Delta c_r^\top}.
\end{align*}
Then, taking the logarithm and applying the inequality $\log(1+x)\leq x$, we get
\begin{align*}
    \log \E[e^{t \xi}] &= \sum_{r\geq 0} \Delta c_{r+1}^\top \log\left(e^{t c_{r+1}}(1-q_r)+q_r\right) \\
    &\leq \sum_{r\geq0} \Delta c_{r+1}^\top e^{t c_{r+1}} (1-q_r).
\end{align*}
By assumption \eqref{condition_expodecay}, there exists $t>0$ such that this sum is finite.
Then, by comparison with the total size of $(Z_n)$, we get the exponential decay of the size of $W_\rho$.

\bibliographystyle{plain}
\bibliography{boolean}

\begin{thebibliography}{10}

\bibitem{ATT}
Daniel Ahlberg, Vincent Tassion, and Augusto Teixeira.
\newblock Sharpness of the phase transition for continuum percolation in {$\Bbb
  R^2$}.
\newblock {\em Probab. Theory Related Fields}, 172(1-2):525--581, 2018.

\bibitem{Braun_2024}
Georg Braun.
\newblock Boolean percolation on digraphs and random exchange processes.
\newblock {\em Journal of Applied Probability}, 61(3):755–766, 2024.

\bibitem{coletti2014absencepercolationbernoulliboolean}
Cristian~F. Coletti and Sebastian~P. Grynberg.
\newblock Absence of percolation in the {B}ernoulli {B}oolean model, 2014.
\newblock Available at \url{https://arxiv.org/abs/1402.3118}.

\bibitem{ColettiMirandaGrynberg}
Cristian~F. Coletti, Daniel Miranda, and Sebastian~P. Grynberg.
\newblock Boolean percolation on doubling graphs.
\newblock {\em J. Stat. Phys.}, 178(3):814--831, 2020.

\bibitem{dembin2022sharpsharpnesspoissonboolean}
Barbara Dembin and Vincent Tassion.
\newblock Almost sharp sharpness for {P}oisson {B}oolean percolation, 2022.
\newblock Available at \url{https://arxiv.org/abs/2209.00999}.

\bibitem{DCRT}
Hugo Duminil-Copin, Aran Raoufi, and Vincent Tassion.
\newblock Subcritical phase of $d$-dimensional {Poisson{\textendash}Boolean}
  percolation and its vacant set.
\newblock {\em Annales Henri Lebesgue}, 3:677--700, 2020.

\bibitem{gouere}
Jean-Baptiste Gou{\'e}r{\'e}.
\newblock {Subcritical regimes in the Poisson Boolean model of continuum
  percolation}.
\newblock {\em The Annals of Probability}, 36(4):1209 -- 1220, 2008.

\bibitem{Hall}
Peter Hall.
\newblock {On Continuum Percolation}.
\newblock {\em The Annals of Probability}, 13(4):1250 -- 1266, 1985.

\bibitem{review}
Valdivino~V. Junior, F{\'a}bio~P. Machado, and Krishnamurthi Ravishankar.
\newblock The rumor percolation model and its variations.
\newblock In Vladas Sidoravicius, editor, {\em Sojourns in Probability Theory
  and Statistical Physics - II}, pages 208--227. Springer Singapore, 2019.

\bibitem{lamperti}
John Lamperti.
\newblock Maximal {B}ranching {P}rocesses and '{L}ong-{R}ange {P}ercolation'.
\newblock {\em Journal of Applied Probability}, 7(1):89--98, 1970.

\bibitem{LEBENSZTAYN20082130}
E.~Lebensztayn and P.M. Rodríguez.
\newblock The disk-percolation model on graphs.
\newblock {\em Statistics \& Probability Letters}, 78(14):2130--2136, 2008.

\bibitem{LyonsPeres}
Russell Lyons and Yuval Peres.
\newblock {\em Probability on Trees and Networks}, volume~42 of {\em Cambridge
  Series in Statistical and Probabilistic Mathematics}.
\newblock Cambridge University Press, New York, 2016.
\newblock Available at \url{https://rdlyons.pages.iu.edu/}.

\bibitem{book_continuum_perco}
Ronald Meester and Rahul Roy.
\newblock {\em Continuum Percolation}.
\newblock Cambridge Tracts in Mathematics. Cambridge University Press, 1996.

\end{thebibliography}

\end{document}